\theoremstyle{plain}
\newtheorem{thm}{Theorem}[section]
\newtheorem{theorem}[thm]{Theorem}
\newtheorem{lemma}[thm]{Lemma}
\newtheorem{corollary}[thm]{Corollary}
\newtheorem{proposition}[thm]{Proposition}
\theoremstyle{definition}
\newtheorem{remark}[thm]{Remark}
\newtheorem{definition}[thm]{Definition}
\numberwithin{equation}{section}
\newcommand{\sD}{{\mathcal D}}
\newcommand{\sE}{{\mathcal E}}
\newcommand{\R}{{\mathbb R}}
\title[A remark on Euler-like vector fields]{A remark on Euler-like vector fields}
\author{Haoyuan Gao}
\begin{document}
\maketitle  \setcounter{tocdepth}{1}

\begin{abstract}
In this note, we show that (the germ of) each Euler-like vector field comes from a tubular neighborhood embedding given by the normal exponential map of some Riemannian metric.
\end{abstract}

\section{Introduction}

The notion of Euler-like vector field was introduced by Bursztyn, Lima and Meinrenken to deal with splitting theorems for some geometric structures, e.g., Poisson structures, Lie algebroids, Dirac structures and generalized complex structures \cite{BLM}. Euler-like vector fields also enable us to obtain simpler proofs of some classical results, e.g., the Darboux theorem and the Morse-Bott lemma \cite{HSSH, M}. In \cite{HSSH}, an algebraic characterization of Euler-like vector fields was given. Based on their algebraic point of view, Haj Saeedi Sadegh and Higson developed the method of deformation to the normal cone from algebraic geometry to differential geometry, and obtained a new approach to Connes' tangent groupoid. See also \cite{H} for deformation to the normal cone in differential geometry. We refer to \cite{HY, BBLM, Mo} for further developments and applications.

We recall the notion of Euler-like vector field and its alternative characterization by tubular neighborhood embedding as follows. First recall that the Euler vector field on a real vector space $V$ of dimension $k$ with linear coordinates $(x^1, \cdots, x^k)$ is defined to be
$$\sE = \sum\limits_{i = 1}^k x^i \frac{\partial}{\partial x^i}.$$
It is clear that the definition of $\sE$ dose not depend on the choice of linear coordinates. The notion of Euler vector field can be directly generalized to vector bundles. Let $E \to M$ be a smooth vector bundle of rank $k$ over a smooth manifold $M$. The Euler vector field on $E$ is defined to be
$$\sE = \sum\limits_{i = 1}^k x^i \frac{\partial}{\partial x^i},$$
where $(x^1, \cdots, x^k)$ are linear coordinates in the fiber direction. It is clear that $\sE$ is globally defined. To introduce the notion of Euler-like vector field, we need to recall the notion of linear approximation of a vector field. Let $M$ be a smooth manifold, and $N \subset M$ an embedded smooth submanifold. Denote $\nu(M, N) = TM|_N / TN$ the normal bundle of $N$ in $M$. We write $\nu_N = \nu(M, N)$ if the ambient manifold is clear. Let $X \in \Gamma(TM)$ be a smooth vector field tangent to N. Then we have a smooth map of pairs $X : (M, N) \to (TM, TN)$. Then the differential of $X$ induces a bundle map
$$\nu(X) : \nu(M, N) \to \nu(TM, TN).$$
Note that there is a canonical isomorphism $\nu(TM, TN) \cong T\nu(M, N)$, c.f. \cite{BLM}. In this sense, we obtain a smooth vector field
$$\nu(X) : \nu_N \to T\nu_N$$
on $\nu_N$. The vector field $\nu(X) \in \Gamma(T\nu_N)$ is called the linear approximation of $X$. The Euler vector field $\sE$ on a vector bundle $E \to M$ is zero on the zero-section $M \subset E$. Under the canonical isomorphism $\nu(E, M) \cong E$, c.f., \cite{BLM}, it is easy to see that the linear approximation of $\sE$ is itself. Now we introduce Euler-like vector field and (a strong version of) tubular neighborhood embedding.
\begin{definition} \cite[Definition 2.6]{BLM}
    Let $M$ be a smooth manifold, and $N \subset M$ an embedded smooth submanifold. A smooth vector field $X \in \Gamma(TM)$ is called Euler-like (along $N$) if $X|_N = 0$, and its linear approximation is the Euler vector field, i.e., $\nu(X) = \sE$.
\end{definition}
\begin{definition} \cite[Definition 2.3]{BLM} \label{def-tubular-nbd-embedding}
    Let $M$ be a smooth manifold, and $N \subset M$ an embedded smooth submanifold. A tubular neighborhood embedding for $N \subset M$ is an embedding
    $$\psi : \nu_N \to M,$$
    taking the zero-section of $\nu_N$ to $N$, and such that the map $\nu(\psi)$ induced by
    $$\psi: (\nu_N, N) \to (M, N)$$
    is the identity map of $\nu_N$ under the canonical isomorphism $\nu(\nu_N, N) \cong \nu_N = \nu(M, N)$.
\end{definition}
\begin{remark}
    Let $\psi : \nu(M, N) \to M$ be a smooth map taking the zero-section of $\nu_N = \nu(M, N)$ to $N$. Using local smooth coordinates of $M$ in which $N$ is given by a slice, one can see that under the canonical isomorphism $\nu(\nu_N, N) \cong \nu_N = \nu(M, N)$, the induced map
    $$\nu(\psi) : \nu(M, N) \to \nu(M, N)$$
    is given by
    $$\nu(\psi)(p, v) = \big(\psi(p), d\psi_{(p, 0)}(v)\big), \quad \forall p \in N, \,\, \forall v \in \nu(M, N)|_p = T_pM/T_pN.$$
    Here we identified $N$ with the zero-section of $\nu_N$, and used the canonical isomorphism $T_{(p, 0)}\nu_N = T_pN \oplus \nu(M, N)|_p$ in $d\psi_{(p, 0)}(v)$, and $d\psi_{(p, 0)}$ denotes the linear map
    $$d\psi_{(p, 0)} : T_{(p, 0)}\nu_N / T_{(p, 0)}N \to T_{\psi(p)}M / T_{\psi(p)}N$$
    induced by the tangent map
    $$d\psi_{(p, 0)} : T_{(p, 0)}\nu_N \to T_{\psi(p)}M.$$
    Therefore, the induced map $\nu(\psi)$ depends only on $\psi$ near $N$. If $\psi : \nu_N \to M$ is a tubular neighborhood embedding, then we have that the restriction of $\psi$ to $N$ is the identity map of $N$, and the differential of $\psi$, restricted to the vertical tangent vectors, also induces identity map from $\nu_N$ to itself.
\end{remark}

Given a tubular neighborhood embedding $\psi : \nu_N \to M$, it is easy to see that the pushforward vector field $\psi_* \sE$ is Euler-like along $N$ in $\psi(\nu_N)$. Here $\sE$ is the Euler vector field on the normal bundle $\nu_N$, and $\psi_* \sE$ is defined by
$$(\psi_* \sE)_{\psi(v)} = d\psi_{v}(\sE_v), \quad \forall v \in \nu_N.$$
Conversely, given an Euler-like vector field $X$ on $M$, it was shown in \cite{BLM} that the flow of $X$ provides a unique (germ of) tubular neighborhood embedding $\psi$ such that $X = \psi_*(\sE)$ in an open neighborhood of $N$. In fact, Bursztyn, Lima and Meinrenken proved that the above correspondence between tubular neighborhood embeddings and Euler-like vector fields is one-to-one (in the level of germ). We have the following theorem.
\begin{theorem} \cite[Proposition 2.7]{BLM} \label{one-to-one-tubular-Euler-like}
    The correspondence that associates to a tubular neighborhood embedding its associated (pushforward) Euler-like vector field determines a bijection from germs of tubular neighborhood embeddings to germs of Euler-like vector fields. Here a germ means a germ near $N$.
\end{theorem}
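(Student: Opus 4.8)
\emph{Proof strategy.} The plan is to prove that the assignment $\psi\mapsto\psi_*\sE$ is a bijection on germs by exhibiting an explicit inverse: to an Euler-like vector field $X$ I will attach the germ of a tubular neighbourhood embedding $\psi_X$ reconstructed from the behaviour of the flow of $X$ as $t\to-\infty$. Two easy observations turn this into the whole problem. The germ of $\psi_*\sE$ near $N$ depends only on the germ of $\psi$ near $N$, since $(\psi_*\sE)_{\psi(v)}=d\psi_v(\sE_v)$; and $\psi_*\sE$ is Euler-like along $N$, as noted in the paragraph preceding the theorem (it is naturality of the linear approximation under the diffeomorphism $\psi$ together with $\nu(\sE)=\sE$). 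So the map descends to germs and takes values among Euler-like vector fields, and it remains to show that every Euler-like $X$ equals $\psi_*\sE$ for a unique germ of tubular neighbourhood embedding.

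For the construction I will work in a chart adapted to $N$, with coordinates $(x,y)$, where $x$ is a coordinate along $N$ and $y=(y^1,\dots,y^k)$ is transverse, so that $N=\{y=0\}$ and $\nu_N$ is locally identified with the $y$-directions. A short coordinate computation (parallel to the one in the remark following Definition~\ref{def-tubular-nbd-embedding}) turns $X|_N=0$ and $\nu(X)=\sE$ into the normal form
\[
X=\sum_i\bigl(y^i+O(|y|^2)\bigr)\frac{\partial}{\partial y^i}+\sum_j O(|y|^2)\,\frac{\partial}{\partial x^j};
\]
thus along integral curves $\dot y=y+O(|y|^2)$ and $\dot x=O(|y|^2)$, and since $\tfrac{d}{dt}|y|^2=2|y|^2+O(|y|^3)>0$ near $N$, the flow $\Phi^X_t$ is defined for all $t\le 0$ on a neighbourhood of $N$ and contracts toward $N$. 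The technical heart of the proof is then the following asymptotic statement: for each $p=(x_0,0)\in N$ and each $v\in(\nu_N)_p$ there is a unique integral curve $\gamma_{(x_0,v)}$ of $X$ with $\gamma_{(x_0,v)}(t)=(x_0,e^tv)+O(e^{2t})$ as $t\to-\infty$; every integral curve of $X$ converging to $p$ is of this form; the curve $\gamma_{(x_0,v)}$ depends smoothly on $(x_0,v)$ up to and including $v=0$ (where it is the constant curve at $p$); and translating time by $t$ replaces the parameter $v$ by $e^t v$.

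Granting this, set $\psi_X(x_0,v):=\gamma_{(x_0,v)}(0)$; all of its required properties are then bookkeeping against the definitions in the introduction. As $\gamma_{(x_0,0)}$ is constant, $\psi_X$ restricts to $\mathrm{id}_N$ on the zero section. The time-translation property gives $\psi_X(x_0,e^tv)=\Phi^X_t\bigl(\psi_X(x_0,v)\bigr)$, i.e. $\psi_X\circ\Phi^{\sE}_t=\Phi^X_t\circ\psi_X$, and differentiating at $t=0$ yields $(\psi_X)_*\sE=X$. Since the $O(e^{2t})$ correction is of second order in $v$, one has $\psi_X(x_0,v)=(x_0,v)+O(|v|^2)$, so the fibrewise differential of $\psi_X$ along the zero section is the identity of $\nu_N$ and hence $\nu(\psi_X)=\mathrm{id}$ by the formula in that remark; combined with $\psi_X|_N=\mathrm{id}_N$ this makes $d\psi_X$ invertible all along the zero section, so after shrinking $\psi_X$ is an embedding onto a neighbourhood of $N$ — a tubular neighbourhood embedding with $(\psi_X)_*\sE=X$. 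For uniqueness, if $\psi$ is a tubular neighbourhood embedding with $\psi_*\sE=X$, then $t\mapsto\psi(x_0,e^tv)$ is an integral curve of $X$ converging to $p$, and $\nu(\psi)=\mathrm{id}$ forces its expansion to be $(x_0,e^tv)+O(e^{2t})$, so by the uniqueness clause above it coincides with $\gamma_{(x_0,v)}$ and therefore $\psi=\psi_X$ near $N$. Finally, although $\psi_X$ was built chartwise, this uniqueness shows that the local constructions agree on overlaps and patch to a global germ $\psi_X:\nu_N\to M$.

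The step I expect to be the main obstacle is the asymptotic statement above: it is the construction of the trajectories of $X$ asymptotic to $N$, which I would carry out by a weighted contraction mapping — solving the integral equation $\delta(t)=\int_{-\infty}^t G(t,s)\,R\bigl(s,(x_0,e^sv)+\delta(s)\bigr)\,ds$ for the correction $\delta$ in the Banach space of curves with $\sup_{t\le 0}e^{-2t}|\delta(t)|<\infty$, where $G$ is the propagator of the linear part and $R$ the remainder, and using the $O(|y|^2)$ bounds together with Gr\"onwall's inequality to obtain a contraction on a small ball for $|v|$ small; smoothness in the parameters $(x_0,v)$ down to the zero section then follows by differentiating the fixed-point equation. (Uniqueness of such trajectories is a separate Gr\"onwall estimate: a nonzero difference of two solutions with the same leading term would decay no faster than $e^t$, contradicting the prescribed $O(e^{2t})$ behaviour.) Everything else is a direct unwinding of the definitions of Euler-like vector field and tubular neighbourhood embedding recalled in the introduction.
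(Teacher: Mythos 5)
The paper does not actually prove this theorem: it is quoted from \cite[Proposition 2.7]{BLM} (in the germ-level form of \cite{HSSH}), and the surrounding text only records the idea that the flow of $X$ recovers $\psi$. So your proposal has to be measured against the Bursztyn--Lima--Meinrenken argument rather than anything in this note. Your overall strategy is the same in spirit --- reconstruct $\psi$ from the behaviour of the backward flow of $X$ near $N$, and get uniqueness because the asymptotics pin down each trajectory --- but the technical realization differs. BLM first use an auxiliary tubular neighborhood embedding to reduce to $M=\nu_N$, write $X=\sE+Z$ with $Z$ vanishing to second order, and obtain the required diffeomorphism as the limit $\lim_{\lambda\to 0}\kappa_\lambda^{-1}\circ\Phi^X_{\log\lambda}$ of the flow conjugated by the fiberwise dilations $\kappa_\lambda$, the point being that $\lambda^{-1}\kappa_\lambda^*X$ extends smoothly to $\lambda=0$. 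You instead construct the individual trajectories asymptotic to $N$ by a weighted contraction mapping (a stable-manifold-type argument). Both routes work; the rescaling argument delivers smoothness of $\psi_X$ in $(x_0,v)$ down to $v=0$ essentially for free, whereas in your setup that smoothness (differentiating the fixed-point equation in the weighted Banach space, uniformly as $v\to 0$) is the step that needs the most care, as you yourself anticipate.

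One genuine imprecision to repair in the uniqueness step: $\nu(\psi)=\mathrm{id}$ alone does not force $\psi(x_0,e^tv)=(x_0,e^tv)+O(e^{2t})$. In adapted coordinates it only gives $\psi(x_0,w)=\bigl(x_0+L_{x_0}(w)+O(|w|^2),\,w+O(|w|^2)\bigr)$ for some linear map $L_{x_0}:(\nu_N)_{x_0}\to T_{x_0}N$, because the definition of tubular neighborhood embedding constrains $d\psi_{(x_0,0)}$ on the fiber only modulo $T_{x_0}N$; a priori the $x$-component of your curve contains a term $e^tL_{x_0}(v)$, which is not $O(e^{2t})$. You must also invoke that the curve is an integral curve of $X$: by your normal form its $x$-velocity is $O(|y|^2)=O(e^{2t})$, so integrating from $t=-\infty$ gives $c_x(t)=x_0+O(e^{2t})$ and hence $L_{x_0}(v)=0$. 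With that one extra line the uniqueness argument, and therefore the patching of the chartwise constructions into a global germ, goes through.
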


\begin{remark}
    Here we use the statement from \cite{HSSH} instead of the original statement in \cite{BLM} because we do not require an Euler-like vector field to be complete. See also Remark 2.9 in \cite{BLM}.
\end{remark}

In the following of this note, we show that (the germ of) each tubular neighborhood embedding can be realized by the normal exponential map of some Riemannian metric on (an open subset of) the ambient manifold. In Section 2, we review basic properties of the normal exponential map, and give a precise statement of the main theorem. In Section 3, we prove the main theorem. In the appendix, we consider extension of smooth functions on a vector bundle. This enables us to discuss germs (of tubular neighborhood embeddings) freely.

Throughout this note, all smooth manifolds are assumed to be Hausdorff and second countable. When we say a Riemannian metric, we mean a Riemannian metric on a smooth manifold, i.e., a smooth fiberwise metric on the tangent bundle of the given manifold. For a vector bundle, to distinguish from Riemannian metrics, we use the terminology of bundle metrics to stand for fiberwise metrics on the given vector bundle.

\vspace{5mm}

{\em Acknowledgements:} The author would like to thank Zelin Yi for useful discussions.

\section{Normal exponential map}

In this section, we review basic properties of the normal exponential map of a Riemannian metric along an embedded smooth submanifold.

Let $(M, g)$ be a smooth Riemannian manifold, and $p \in M$ a point in $M$. For $v \in T_pM$, we denote $\gamma_v$ the unique maximal geodesic with initial point $p$ and initial velocity vector $v$, i.e., $\gamma_v(0) = p$, and $\gamma_v'(0) = v$. Denote
$$\sD_p := \big\{v \in T_pM \, \big| \, {\rm The \,\, geodesic \,\,}\gamma_v \,\, {\rm is \,\, defined \,\, on \,\, an \,\, interval \,\, containing \,\,}[0, 1]\big\}.$$
Recall that the exponential map at $p$
$$\exp_p : \sD_p \to M$$
is defined by
$$\exp_p(v) = \gamma_v(1), \quad \forall v \in \sD_p.$$
The rescaling property of geodesics indicates that $\sD_p$ is nonempty. Let $\sD = \bigcup\limits_{p \in M}\sD_p \subset TM$. The collection of $\exp_p$ yields a map
$$\exp : \sD \to M, \quad \exp(v) = \exp_{\pi(p)}(v), \quad \forall v \in TM,$$
called the exponential map of $g$, where $\pi : TM \to M$ is the projection. The exponential map has the following properties.

\begin{proposition} \cite[Proposition 5.19]{L} \label{property-exp-map}
    Let $(M, g)$ be a Riemannian manifold, and let $\exp : \sD \to M$ be the exponential map defined as above.
    \begin{itemize}
        \item [(i)] $\sD$ is an open subset of $TM$ containing the zero-section, and each set $\sD_p \subset T_pM$ is star-shaped with respect to $0 \in T_pM$.
        \item [(ii)] For each $v \in TM$, the maximal geodesic $\gamma_v$ is given by
        $$\gamma_v(t) = \exp(tv)$$
        for all $t$ such that either side is defined.
        \item [(iii)] The exponential map is smooth.
        \item [(iv)] For each $p \in M$, the differential $d(\exp_p)_0 : T_0(T_pM) \cong T_pM \to T_pM$ is the identity map of $T_pM$, under the usual identification of $T_0(T_pM)$ with $T_pM$.
    \end{itemize}
\end{proposition}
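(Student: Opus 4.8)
The plan is to recover Proposition~\ref{property-exp-map} from standard ODE theory, since it is classical; I will only indicate the architecture of the argument. The central object is the \emph{geodesic vector field} $G \in \Gamma(T(TM))$ (the geodesic spray): the unique second-order vector field on $TM$ whose integral curve through $v \in T_pM$ is $t \mapsto \gamma_v'(t)$. First I would produce $G$ by writing the geodesic equation $\ddot\gamma^{\,k} + \Gamma^k_{ij}\,\dot\gamma^{\,i}\dot\gamma^{\,j} = 0$ in an arbitrary chart, reading off the associated first-order system on the induced chart of $TM$, and checking that these local expressions are compatible under change of coordinates, so that $G$ is a genuine smooth vector field; its smoothness comes from that of the Christoffel symbols. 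Let $\theta : \mathcal{W} \to TM$ denote the maximal flow of $G$, with $\mathcal{W} \subset \R \times TM$ open and $\theta$ smooth by the fundamental theorem on flows.

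For (i), I would observe that $\sD = \{\, v \in TM : (1,v) \in \mathcal{W} \,\}$, hence $\sD$ is open as the preimage of $\mathcal{W}$ under the smooth map $v \mapsto (1,v)$; the zero-section consists of stationary points of $G$ (constant geodesics), so it lies in every $\sD_p$. Star-shapedness of $\sD_p$ with respect to $0$ I would deduce from the rescaling lemma: $t \mapsto \gamma_v(\lambda t)$ is a geodesic with initial velocity $\lambda v$, whence $\gamma_{\lambda v}(t) = \gamma_v(\lambda t)$ on the common interval of definition. So if $v \in \sD_p$, i.e.\ $\gamma_v$ is defined on $[0,1]$, then for $c \in [0,1]$ the geodesic $\gamma_{cv}$, being $t \mapsto \gamma_v(ct)$, is defined at least on $[0,1]$, i.e.\ $cv \in \sD_p$.

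Part (ii) is then the rescaling lemma rewritten as $\exp(tv) = \gamma_{tv}(1) = \gamma_v(t)$. For (iii) I would factor $\exp$ through the time-one flow, $v \mapsto (1,v) \mapsto \theta(1,v) = \gamma_v'(1) \mapsto \pi(\gamma_v'(1)) = \gamma_v(1)$, a composite of smooth maps. For (iv), with $v \in T_pM$ fixed, the curve $t \mapsto \exp_p(tv)$ is defined near $t = 0$ by the openness established in (i) and equals $\gamma_v(t)$ by (ii), so $d(\exp_p)_0(v) = \frac{d}{dt}\big|_{t=0}\exp_p(tv) = \gamma_v'(0) = v$.

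The only step carrying real content is the smooth dependence of ODE solutions on initial data, which underlies both the openness of $\mathcal{W}$ and the smoothness of $\theta$; everything else is formal, together with the one-line rescaling computation. Accordingly the main (and only mild) obstacle I anticipate is the preliminary check that the chart-wise geodesic equations patch into a single well-defined smooth vector field on $TM$; once $G$ is available all four assertions follow at once, which is why in the paper this proposition is simply quoted from \cite{L}.
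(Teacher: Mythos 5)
Your proposal is correct and is essentially the argument of the cited source: the paper gives no proof of its own here, quoting \cite[Proposition 5.19]{L}, and Lee's proof proceeds exactly as you describe, via the geodesic spray on $TM$, the fundamental theorem on flows, and the rescaling lemma. No gaps.
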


Now let $N \subset M$ be an embedded smooth submanifold. We have the Riemannian normal bundle $\Lambda N \to N$ of $N$ in $M$ defined by
$$\Lambda N = \bigsqcup\limits_{p \in N}\Lambda_p N,$$
where
$$\Lambda_p N = \big\{v \in T_pM \, \big| \, g(v, w) = 0, \,\, \forall w \in T_pN\big\}.$$
It is clear that $\Lambda N$ is a subbundle of $TM|_N$, and is canonically isomorphic to the normal bundle $\nu_N = \nu(M, N)$.

Let $\sD_N = \sD \cap \Lambda N$, and $E : \sD_N \to M$ the restriction of the exponential map. The map $E$ is called the normal exponential map of $N$ in $M$. A normal neighborhood of $N$ in $M$ is an open subset $U \subset M$ that is the diffeomorphic image under $E$ of an open subset $V \subset \sD_N$ whose intersection with each fiber $\Lambda_pN$ is star-shaped with respect to $0$. A normal neighborhood of $N$ in $M$ is called a normal tubular neighborhood if it is the diffeomorphic image under $E$ of a subset $V \subset \sD_N$ of the form
\begin{equation} \label{def-normal-tubular}
    V = \{(p, v) \in \Lambda N \, \big| \, |v|_g < \delta(p)\},
\end{equation}
for some positive continuous function $\delta : N \to \R$. The existence of normal tubular neighborhood was proved in \cite{L}.

\begin{proposition} \cite[Theorem 5.25]{L} \label{existence-normal-tubular}
    Let $(M, g)$ be a Riemannian manifold. Every embedded smooth submanifold of $M$ has a normal tubular neighborhood in $M$.
\end{proposition}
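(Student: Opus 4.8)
The plan is to establish the proposition — a classical fact about Riemannian submanifolds — from the properties of the exponential map recorded in Proposition~\ref{property-exp-map}, in three steps: first show that the normal exponential map $E$ has invertible differential at each point of the zero section; then promote this to a diffeomorphism defined on an open neighborhood of $N$; and finally reshape that neighborhood into one of the tubular form \eqref{def-normal-tubular}. For the first step, I would compute $dE$ along the zero section. Fix $p \in N$, identified with the zero vector $0_p$ in the fiber $\Lambda_p N \subset \sD_N$. The tangent space $T_{0_p}(\Lambda N)$ splits canonically as $T_pN \oplus \Lambda_p N$, the second summand being the vertical tangent space of the fiber. Since $E$ restricts to the inclusion $N \hookrightarrow M$ on the zero section, $dE_{0_p}$ carries $T_pN$ isomorphically onto $T_pN \subset T_pM$; since $E$ restricts in the fiber direction to $\exp_p$, Proposition~\ref{property-exp-map}(iv) shows that $dE_{0_p}$ carries the vertical summand $\Lambda_p N$ isomorphically onto $\Lambda_p N \subset T_pM$. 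As $T_pM = T_pN \oplus \Lambda_p N$ is the $g$-orthogonal decomposition, $dE_{0_p} : T_{0_p}(\Lambda N) \to T_pM$ is an isomorphism.

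By the inverse function theorem, each $0_p$ then has an open neighborhood in $\sD_N$ on which $E$ is a diffeomorphism onto an open subset of $M$, and the union of these is an open set $W \subset \sD_N$ containing the whole zero section on which $E$ is a local diffeomorphism. For the second step, I would shrink $W$ so that $E$ becomes injective. Here $E$ is injective on the zero section (there it is the inclusion of $N$), and one invokes the standard fact that a local diffeomorphism restricting to an embedding on an embedded submanifold is a diffeomorphism on some neighborhood of that submanifold. The proof of this fact is by contradiction: if no such neighborhood existed, one would obtain sequences $v_i \neq w_i$ in $\sD_N$ with $E(v_i) = E(w_i)$ which, after localizing over a precompact piece of $N$ (using second countability) and passing to subsequences, converge to a common zero vector $0_p$, contradicting that $E$ is a local diffeomorphism there. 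After this shrinking we may assume $E|_W : W \to E(W)$ is a diffeomorphism onto an open subset of $M$.

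For the third step, set $f(p) = \sup\{ r > 0 : \{v \in \Lambda_p N : |v|_g < r\} \subset W \}$; since $W$ is open, $f : N \to (0, +\infty]$ is lower semicontinuous and everywhere positive, so a partition-of-unity argument on the paracompact manifold $N$ produces a continuous function $\delta : N \to (0, +\infty)$ with $\delta(p) < f(p)$ for all $p \in N$. Then $V = \{(p, v) \in \Lambda N : |v|_g < \delta(p)\}$ lies in $W$, each $V \cap \Lambda_p N$ is a star-shaped ball, and $E|_V$ is a diffeomorphism onto the open set $E(V) \subset M$, which is therefore a normal tubular neighborhood of $N$. The hard part is the injectivity improvement in the second step — upgrading a local diffeomorphism near $N$ to an honest one on a neighborhood of $N$, which must cope with the non-compactness of $N$ and the global topology of $M$; the differential computation and the construction of the continuous cutoff $\delta$ are routine by comparison. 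I would carry out the details following \cite{L}.
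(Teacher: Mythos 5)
The paper offers no proof of this proposition at all --- it is imported verbatim from \cite[Theorem 5.25]{L} --- so the comparison is really between your sketch and Lee's proof. Your steps 1 and 3 (the computation of $dE_{0_p}$ via the splitting $T_{0_p}(\Lambda N)\cong T_pN\oplus\Lambda_pN$ and Proposition \ref{property-exp-map}(iv), and the construction of the continuous radius function $\delta$) match the standard argument and are fine.

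The gap is in step 2, and it sits exactly where you locate the difficulty. The ``standard fact'' you invoke --- a local diffeomorphism restricting to an embedding on an embedded submanifold is injective on a neighborhood of that submanifold --- is a genuine theorem only when the submanifold is compact; for non-compact $N$ it is not something you can cite off the shelf, and your sketched proof of it does not close. In your contradiction argument, from $E(v_i)=E(w_i)$ with $v_i\to 0_p$ you cannot conclude that $w_i\to 0_p$ as well: the zero section of $\Lambda N$ is not compact and has no countable neighborhood basis, so ``shrinking neighborhoods'' do not force $|w_i|\to 0$, and the base points $\pi(w_i)$ could a priori escape any precompact piece of $N$ while $E(w_i)$ still converges to $p$. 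What rescues the argument in \cite{L} is a quantitative Riemannian input that your sketch omits: the geodesic from $\pi(v)$ to $E(v)$ has length $|v|_g$, so $d_g\big(\pi(v),E(v)\big)\le |v|_g$, and hence any two normal vectors with $E(v)=E(w)$ satisfy $d_g\big(\pi(v),\pi(w)\big)\le |v|_g+|w|_g$. Combining this with a Lipschitz-continuous ``injectivity radius'' function $\rho$ on $N$ (so that $E$ is injective on $\{(q,v): q\in B_{\rho(p)}(p)\cap N,\ |v|_g<\rho(p)\}$) and taking $\delta=\rho/2$ forces any pair of identified points in $V$ to lie in a single such set, where injectivity is known. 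You should either restrict your sequence argument to the compact case and then carry out this metric estimate for the general case, or replace step 2 by the estimate-based argument outright; as written, the mechanism you propose for the hard step would fail for non-compact $N$.
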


\begin{remark}
    Given a positive continuous function $\delta : M \to \R$ on a smooth manifold $M$, using a partition of unity subordinate to a suitable cover, it is easy to see that there is a positive smooth function $\tilde{\delta} : M \to \R$ such that $\tilde{\delta}(p) < \delta(p)$, $\forall p \in M$. Hence if we require that the function $\delta$ in (\ref{def-normal-tubular}) to be positive and smooth, Proposition \ref{existence-normal-tubular} still holds.
\end{remark}

Under the canonical isomorphism $\Lambda N \cong \nu_N$, a normal tubular neighborhood of $N$ in $M$ can be viewed as an embedding
$$\psi : V \to M,$$
where $V \subset \nu_N$ is an open subset containing the zero-section $N$. Then by Proposition \ref{property-exp-map}, the induced map
$$\nu(\psi) : \nu(V, N) = \nu(M, N) \to \nu(M, N)$$
is the identity map of $\nu(M, N)$. Therefore, any embedding $\tilde{\psi} : \nu_N \to M$ that agrees with $\psi$ near $N$ is a tubular neighborhood embedding in the sense of Definition \ref{def-tubular-nbd-embedding}. In fact, Corollary \ref{cor-extension-vector-bundle} ensures that such an embedding $\tilde{\psi}$ that agrees with $\psi$ in a neighborhood of $N$ always exists. Therefore, tubular neighborhood embedding in the sense of Definition \ref{def-tubular-nbd-embedding} always exists. In fact, we shall prove that each tubular neighborhood embedding can be realized by the normal exponential map of some Riemannian metric near $N$. More precisely, we have the following main theorem.

\begin{theorem} \label{main-thm}
    Let $M$ be a smooth manifold, and $N \subset M$ an embedded smooth submanifold. Let $\psi : \nu_N \to M$ be a tubular neighborhood embedding. Then we have an open subset $U \subset \psi(\nu_N)$ containing $N$ and a Riemannian metric $g$ on $U$ such that $U$ is the diffeomorphic image under the normal exponential map of an open subset $\widehat{U} \subset \Lambda N$ of the Riemannian normal bundle $\Lambda N$ in $U$ containing the zero-section. Moreover, we have the following commutative diagram
    \begin{equation} \label{main-diagram}
        \xymatrix{
        \Psi^{-1}(U)
        \ar[dr]_{\psi} \ar[r]^{\Psi} & \widehat{U} \ar[d]^{E}\\
        & U,
        }
    \end{equation}
    where $\Psi : \nu(M, N) \to \Lambda N$ is the canonical bundle isomorphism, and the vertical map $E$ is the normal exponential map.
\end{theorem}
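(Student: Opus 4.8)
The plan is to exploit the fact that \emph{some} tubular neighborhood embedding is always visibly of the desired form, and that any two tubular neighborhood embeddings of $N$ in $M$ differ, near $N$, only by a diffeomorphism of $M$ that restricts to the identity on $N$ and induces the identity on $\nu_N$. So I will pick an arbitrary Riemannian metric $g_1$ on $M$, let $\psi_1$ be the tubular neighborhood embedding coming from the $g_1$--normal exponential map, form the diffeomorphism $F:=\psi\circ\psi_1^{-1}$ near $N$, and push the metric forward: $g:=F_*g_1$. The claim is that $g$, restricted to a suitable normal tubular neighborhood, does the job, and that the commutativity of (\ref{main-diagram}) is then a diagram chase with the relevant canonical identifications.

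In detail: choose any Riemannian metric $g_1$ on $M$. By Proposition~\ref{existence-normal-tubular}, $N$ has a normal tubular neighborhood for $g_1$, so the normal exponential map $E_{g_1}$ of $g_1$ restricts to a diffeomorphism from an open set $\widehat{U}_1\subseteq\Lambda^{g_1}N$ onto an open set $U_1\subseteq M$ containing $N$; composing with the canonical bundle isomorphism $\Psi_1:\nu_N\to\Lambda^{g_1}N$ gives $\psi_1:=E_{g_1}\circ\Psi_1$, defined on an open neighborhood $W_1$ of $N$ in $\nu_N$, with $\nu(\psi_1)=\mathrm{id}$ --- this is exactly the computation recalled just before Theorem~\ref{main-thm}, and Corollary~\ref{cor-extension-vector-bundle} allows one to take $\psi_1$ defined on all of $\nu_N$ if one prefers not to work with germs. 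Put $F:=\psi\circ\psi_1^{-1}$, a diffeomorphism from $U_1$ onto $\psi(W_1)\subseteq\psi(\nu_N)$. Since $\psi$ and $\psi_1$ both take $0_x$ to $x$, we get $F|_N=\mathrm{id}_N$; and since $\nu(\psi)=\nu(\psi_1)=\mathrm{id}$, functoriality of the induced map ($\nu(\psi\circ\psi_1^{-1})=\nu(\psi)\circ\nu(\psi_1)^{-1}$) gives $\nu(F)=\mathrm{id}$. Finally set $g:=F_*g_1$, a Riemannian metric on $\psi(W_1)$, so that $F$ is by construction an isometry onto its image.

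To check (\ref{main-diagram}): since $F$ is an isometry and $F|_N=\mathrm{id}$, it carries the $g_1$--orthogonal complement of $TN$ to the $g$--orthogonal complement of $TN$, i.e. $\Lambda^gN=dF(\Lambda^{g_1}N)$ along $N$, and it intertwines geodesics, so near the zero section $E_g=F\circ E_{g_1}\circ(dF)^{-1}$, where $E_g$ is the $g$--normal exponential map. On the other hand $\nu(F)=\mathrm{id}$ says precisely that $dF_x$ descends to the identity of $T_xM/T_xN$ for $x\in N$; since the canonical isomorphisms $\Psi_1:\nu_N\to\Lambda^{g_1}N$ and $\Psi:\nu_N\to\Lambda^gN$ are inverse to the respective restrictions of the quotient map $TM|_N\to\nu_N$, this unwinds to $\Psi=dF|_{\Lambda^{g_1}N}\circ\Psi_1$. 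Combining the two displays, $E_g\circ\Psi=F\circ E_{g_1}\circ\Psi_1=F\circ\psi_1=\psi$ on a neighborhood of $N$. To finish, apply Proposition~\ref{existence-normal-tubular} to the Riemannian manifold $(\psi(W_1),g)$ and the submanifold $N$ to produce a normal tubular neighborhood $U=E_g(\widehat{U})$ with $\widehat{U}\subseteq\Lambda^gN$ open and containing the zero section, shrinking $U$ so that it lies in the neighborhood of $N$ on which $E_g\circ\Psi=\psi$; then $U\subseteq\psi(\nu_N)$, and $g|_U$ together with $\widehat{U}$ are as required, with (\ref{main-diagram}) commuting.

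The only genuinely delicate point is the identification bookkeeping in the last paragraph, where three versions of the normal data are in play --- $\nu_N=\nu(M,N)$, the $g_1$--normal bundle $\Lambda^{g_1}N$, and the $g$--normal bundle $\Lambda^gN$ --- and one must check that the hypothesis $\nu(F)=\mathrm{id}$ is exactly what makes $dF$ intertwine $\Psi_1$ with $\Psi$. The remaining ingredients (existence of $g_1$, the fact that isometries conjugate exponential maps, and functoriality of $\nu(-)$) are routine, but one must keep in mind throughout that all the maps in sight are only germs near $N$, so each equality above should be read as holding after possibly passing to a smaller neighborhood of $N$.
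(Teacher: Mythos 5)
Your proposal is correct and is essentially the paper's own argument in pushforward rather than pullback form: the paper sets $\chi=\varphi\circ\psi^{-1}$ (your $F^{-1}$) and takes $g=\chi^*\tilde g$, which is the same metric as your $F_*g_1$, and likewise uses that the isometry intertwines exponential maps while $\nu(\chi)=\mathrm{id}$ forces $d\chi$ to shift normal vectors only by elements of $TN$, which is your identity $\Psi=dF|_{\Lambda^{g_1}N}\circ\Psi_1$. The only cosmetic difference is that the paper takes $\widehat U=(d\chi)^{-1}(\widehat V)$ directly and reads off that $E$ is a diffeomorphism from the commutative triangle, whereas you invoke Proposition~\ref{existence-normal-tubular} a second time and shrink; both are fine.
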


\section{Proof of the main theorem}

In this section, we prove Theorem \ref{main-thm} which claims that each tubular neighborhood embedding can be realized by the normal exponential map of some Riemannian metric near the given submanifold.

Before we deal with general cases, we first consider the simplest (non-trivial) case in which the submanifold $N$ is a single point.

\begin{proposition}
    Let $M$ be a smooth manifold, and $p \in M$ a point in $M$. Let $\psi : T_pM \to M$ be an embedding such that $\psi(0) = p$ and $d\psi_0 : T_0(T_pM) \cong T_pM \to T_pM$ is the identity map of $T_pM$ under the canonical identification of $T_0(T_pM)$ with $T_pM$. Then there is a Riemannian metric $g$ on $\psi(T_pM)$ such that the exponential map of $g$ at the point $p \in M$ is defined on the whole tangent space $T_pM$
    $$\exp_p : T_pM \to M,$$
    and
    $$\psi = \exp_p.$$
\end{proposition}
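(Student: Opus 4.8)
The plan is to pull back the standard structure from $T_pM$ via $\psi$. Since $\psi: T_pM \to M$ is an embedding, $\psi(T_pM) \subset M$ is an open submanifold diffeomorphic to the vector space $T_pM \cong \mathbb{R}^n$, and $\psi^{-1}: \psi(T_pM) \to T_pM$ is a global chart. The idea is: on the model space $T_pM$ we already know a metric whose exponential map at the origin is defined on all of $T_pM$ and equals the identity, namely any flat (constant-coefficient) inner product, for which geodesics through $0$ are straight lines $t \mapsto tv$ and $\exp_0 = \mathrm{id}_{T_pM}$. So I would take such a flat metric $g_0$ on $T_pM$ and set $g := (\psi^{-1})^* g_0$ on $\psi(T_pM)$, i.e. $g := \psi_* g_0$.

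The key steps, in order: (1) Fix a linear isomorphism (equivalently an inner product) on $T_pM$ to get the flat metric $g_0$ on the manifold $T_pM$; verify directly from the geodesic equation in linear coordinates that its geodesics through the origin are $\gamma_v(t) = tv$, hence $\exp^{g_0}_0$ is globally defined and equals $\mathrm{id}_{T_pM}$ under the identification $T_0(T_pM) \cong T_pM$. (2) Transport $g_0$ to $g = \psi_* g_0$ on the open set $\psi(T_pM) \subset M$; since $\psi$ is a diffeomorphism onto its image, $g$ is a genuine smooth Riemannian metric there. (3) Use the naturality of the exponential map under isometries: $\psi: (T_pM, g_0) \to (\psi(T_pM), g)$ is by construction an isometry, and isometries intertwine exponential maps, i.e. $\exp^g_{\psi(q)} \circ d\psi_q = \psi \circ \exp^{g_0}_q$ on the domain of $\exp^{g_0}_q$. (4) Specialize to $q = 0$, where $\psi(0) = p$: the domain of $\exp^{g_0}_0$ is all of $T_pM$, so $\exp^g_p$ is defined on $d\psi_0(T_0(T_pM)) = T_pM$ (using $d\psi_0 = \mathrm{id}$), and $\exp^g_p = \psi \circ \exp^{g_0}_0 \circ (d\psi_0)^{-1} = \psi \circ \mathrm{id} \circ \mathrm{id} = \psi$.

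I don't expect a serious obstacle here; this proposition is essentially a warm-up. The only points requiring care are bookkeeping with the several canonical identifications — $T_0(T_pM) \cong T_pM$, and the fact that $d\psi_0$ being the identity is exactly what makes the $q=0$ case collapse cleanly — and confirming that a flat metric's exponential map at the origin is globally defined (which is immediate: straight-line geodesics are defined for all time). A mild subtlety worth a sentence is that $\exp^g$ might not be globally defined away from $p$, but the statement only asks for $\exp^g_p$ on $T_pM$, which is exactly what the isometry argument delivers. If one wanted to avoid invoking isometry-naturality as a black box, one could instead argue directly that $t \mapsto \psi(tv)$ satisfies the geodesic equation for $g$ (its image is the $\psi$-image of a $g_0$-geodesic, and $\psi$ preserves the Levi-Civita connections), with initial velocity $d\psi_0(v) = v$, so by uniqueness of geodesics $\gamma^g_v(t) = \psi(tv)$ and hence $\exp^g_p(v) = \psi(v)$.
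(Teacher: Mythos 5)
Your proposal is correct and follows essentially the same route as the paper: both push forward a flat (constant-coefficient) metric from $T_pM$ via $\psi$ and use the fact that its geodesics through the origin are the straight lines $t \mapsto tv$, so that $\exp_p(v) = \psi(v)$. The direct geodesic-equation argument you sketch at the end is exactly how the paper phrases it, rather than invoking isometry-naturality as a black box.
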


\begin{proof}
    Since $T_pM$ is a vector space, we may choose a Riemannian metric $\tilde{g}$ on $T_pM$ whose Levi-Civita connection has vanishing Christoffel symbols with respect to all linear coordinate systems. Then define a Riemannian metric $g$ on $\psi(T_pM)$ by $g := \psi_*(\tilde{g}) = (\psi^{-1})^*(\tilde{g})$. For $v \in T_pM$, we have that $(d\psi_0)^{-1}(v) = v \in T_0(T_pM) \cong T_pM$ by assumption. Since the Levi-Civita connection of $\tilde{g}$ has vanishing Christoffel symbols with respect to all linear coordinate systems, the maximal geodesic $\tilde{\gamma}_v$ in $(T_pM, \tilde{g})$ with $\tilde{\gamma}_v(0) = 0$ and $\tilde{\gamma}'_v(0) = v$ is given by
    $$\tilde{\gamma}_v : \R \to T_pM, \quad \tilde{\gamma}_v(t) = tv, \,\, \forall t \in \R.$$
    Therefore, the maximal geodesic $\gamma_v$ in $\psi(T_pM)$ with $\gamma_v(0) = p$ and $\gamma_v'(0) = v$ is given by
    $$\gamma_v : \R \to M, \quad \gamma_v(t) = \psi\big(\tilde{\gamma}_v(t)\big) = \psi(tv), \,\, \forall t \in \R.$$
    Hence we have
    $$\exp_p(v) = \gamma_v(1) = \psi(v)$$
    for all $v \in T_pM$.
\end{proof}

Now we move on to general cases. Note that the key point in the proof of the case for a single point is to pushforward a suitable metric so that we can write geodesics in the target open subset explicitly.

\begin{proof}[Proof of Theorem \ref{main-thm}]
    Let $\tilde{g}$ be a Riemannian metric on $M$. Denote $\tilde{\Lambda}N = \bigsqcup\limits_{p \in N}\tilde{\Lambda}_p N$ the Riemannian normal bundle of $N$ in $(M, \tilde{g})$. Let $\widetilde{\exp}$ be the exponential map of $\tilde{g}$ and $\tilde{E}$ the normal exponential map of $N$ in $(M, \tilde{g})$. Denote $\Phi : \nu(M, N) \to \tilde{\Lambda}N$ be the canonical bundle isomorphism. Then by Proposition \ref{existence-normal-tubular}, we have an open subset $\widehat{V} \subset \tilde{\Lambda}N$ containing the zero-section such that $V = \tilde{E}(\widehat{V})$ is open in $M$, and $\tilde{E} : \widehat{V} \to V$ is a diffeomorphism. Then we have an embedding
    $$\varphi : \Phi^{-1}(\widehat{V}) \to M$$
    defined by $\varphi = \tilde{E} \circ \Phi$. Then we have the following commutative diagram of diffeomorphisms
    \begin{equation*}
        \xymatrix{
        \Phi^{-1}(\widehat{V})
        \ar[dr]_{\varphi} \ar[r]^{\Phi} & \widehat{V} \ar[d]^{\tilde{E}}\\
        & V.
        }
    \end{equation*}
    Moreover, by Proposition \ref{property-exp-map} we have that the induced map
    $$\nu(\varphi) : \nu(M, N) \to \nu(M, N)$$
    is the identity map of $\nu_N = \nu(M, N)$.

    Now let $\psi : \nu_N \to M$ be a tubular neighborhood embedding as in the assumption of Theorem \ref{main-thm}. Let $U = \psi\big(\Phi^{-1}(\widehat{V})\big) \subset M$. Then $U$ is an open subset of $M$, and we have a diffeomorphism
    $$\chi : U \to V$$
    given by $\chi = \varphi \circ \psi^{-1}$. It is clear that the induced map
    $$\nu(\chi) : \nu(U, N) \to \nu(V, N)$$
    is the identity map of $\nu_N = \nu(M, N) = \nu(U, N) = \nu(V, N)$. In particular, $\chi|_N$ is the identity map of $N$. Let $g$ be the Riemannian metric on $U$ given by $g = \chi^*\tilde{g}$ with the exponential map $\exp$. Denote $\Lambda N = \bigsqcup\limits_{p \in N}\Lambda_p N$ the Riemannian normal bundle of $N$ in $(U, g)$ with the normal exponential map $E$ of $N$ in $(U, g).$ Since $\chi : (U, g) \to (V, \tilde{g})$ is an isometry and $\chi|_N = {\rm Id}_N$, the differential of $\chi$ restricts to a bundle isomorphism
    $$d\chi : \Lambda N \to \tilde{\Lambda}N.$$
    Moreover, for $p \in N$, since $\nu(\chi) : \nu_N \to \nu_N$ is the identity map of $\nu_N$, we have a linear map
    $$\eta_p : \Lambda_p N \to T_pN$$
    such that
    \begin{equation} \label{d-chi}
        d\chi_p(v) = v + \eta_p(v), \quad \forall v \in \Lambda_pN.
    \end{equation}
    Let $\widehat{U} = (d\chi)^{-1}(\widehat{V})$. Then $\widehat{U}$ is an open subset of $\Lambda N$ containing the zero-section. For $v \in \widehat{U} \cap T_pM$ with $p \in N$, since $\chi : (U, g) \to (V, \tilde{g})$ is an isometry, we have
    \begin{equation} \label{tilde-exp-chi-exp}
        \widetilde{\exp}_p\Big(t\big(v + \eta_p(v)\big)\Big) = \chi\big(\exp_p(tv)\big)
    \end{equation}
    for every $t$ whenever $\widetilde{\exp}_p\Big(t\big(v + \eta_p(v)\big)\Big)$ is defined and in $V$. Hence (\ref{tilde-exp-chi-exp}) holds for $t$ in an interval containing $[0, 1]$. In particular, we have
    $$\chi\big(E(v)\big) = \chi\big(\exp_p(v)\big) = \widetilde{\exp}_p\big(v + \eta_p(v)\big) = \tilde{E}\big(v + \eta_p(v)\big).$$
    Then since $\tilde{E}|_{\widehat{V}} = \varphi \circ (\Phi|_{\widehat{V}})^{-1}$, we have
    $$\chi\big(E(v)\big) = \varphi\Big(\Phi^{-1}\big(v + \eta_p(v)\big)\Big) = \varphi(v + T_pN).$$
    Since $\chi = \varphi \circ \psi^{-1}$, we have
    $$E(v) = \psi(v + T_pN).$$
    Since $v \in \widehat{U}$ is arbitrary, we have the following commutative diagram
    \begin{equation*}
        \xymatrix{
        \Psi^{-1}(\widehat{U})
        \ar[dr]_{\psi} \ar[r]^{\Psi} & \widehat{U} \ar[d]^{E}\\
        & U,
        }
    \end{equation*}
    where $\Psi : \nu(M, N) \to \Lambda N$ is the canonical bundle isomorphism. By (\ref{d-chi}), we have that $\Psi^{-1}(\widehat{U}) = \Phi^{-1}(\widehat{V})$. Then since $U = \psi\big(\Phi^{-1}(\widehat{V})\big)$, the maps $\psi$ and $\Psi$ in the above diagram are both diffeomorphisms. Hence the map $E$ in the above diagram is also a diffeomorphism. Therefore, the theorem has been proved.
\end{proof}

\begin{remark}
    In the proof of Theorem \ref{main-thm}, we see that if we choose an initial Riemannian metric $\tilde{g}$ on the ambient manifold $M$, then (the germ of) each tubular neighborhood embedding can be realized by the normal exponential map of a Riemannian metric $g$ near $N$ such that $g$ is isometric to $\tilde{g}$ near $N$.
\end{remark}

By Theorem \ref{one-to-one-tubular-Euler-like} and Theorem \ref{main-thm}, we have that (the germ of) each Euler-like vector field on $M$ along $N$ comes from a Riemannian metric near $N$.

\appendix
\section{}
In the appendix, we consider extension of maps on a vector bundle. Let $\pi : E \to M$ be a smooth vector bundle over a smooth manifold $M$. Let $V \subset E$ be an open subset containing the zero-section $M$. We shall prove that there are open subsets $W' \subset W \subset V$ of $V$ containing the zero-section $M$, and a diffeomorphism
$$\Phi : W \to E$$
such that $\Phi|_{W'} = {\rm Id}_{W'}$. Then given a smooth map $F : V \to M'$ from $V$ to a smooth manifold $M'$, we have a smooth map $\tilde{F} : E \to M'$ such that $\tilde{F}|_{W'} = F|_{W'}$.

Let $g$ be a smooth bundle metric on $E$. Using a partition of unity subordinate to a suitable cover of $M$, it is easy to see that there is a positive smooth function $\delta : M \to \R$ such that
$$\big\{(x, v) \in E \big| |v|_g < \delta(x)\big\} \subset V.$$
Let $W := \big\{(x, v) \in E \big| |v|_g < \delta(x)\big\}$, and $W':= \big\{(x, v) \in E \big| |v|_g < \frac{1}{2}\delta(x)\big\}$. To obtain a diffeomorphism
$$\Phi : W \to E$$
that restricts to the identity map of $W'$, we need the following lemma.

\begin{lemma} \label{lem-diffeo-interval-R}
    There exists a diffeomorphism $\sigma : (-1, 1) \to \mathbb{R}$ of the form
    $$\sigma(t) = \eta(t)t, \quad \forall t \in (-1, 1),$$
    where $\eta : (-1, 1) \to \mathbb{R}$ is a smooth positive function such that $\eta(t) = \eta(|t|)$, $\forall t \in (-1, 1)$, and $\eta|_{[-\frac{1}{2}, \frac{1}{2}]} \equiv 1$.
\end{lemma}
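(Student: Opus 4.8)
The goal is to construct a diffeomorphism $\sigma : (-1,1) \to \R$ of the prescribed form, so the plan is to build $\eta$ directly and then verify the claimed properties. First I would choose a smooth even bump-type function to serve as the derivative of an auxiliary antiderivative: since we need $\sigma$ to be a diffeomorphism with $\sigma(t) = t$ near $0$ and $\sigma(t) \to \pm\infty$ as $t \to \pm 1$, it is natural to prescribe $\sigma'$ rather than $\sigma$ itself. Concretely, I would pick a smooth function $\rho : (-1,1) \to \R$ with $\rho > 0$ everywhere, $\rho \equiv 1$ on $[-\tfrac12, \tfrac12]$, $\rho$ even, and $\rho(t) \to +\infty$ fast enough near $t = \pm 1$ that $\int_0^1 \rho = +\infty$ (for instance $\rho(t) = (1-t^2)^{-2}$ near the endpoints, smoothly interpolated down to $1$ on $[-\tfrac12,\tfrac12]$ using a standard partition-of-unity bump). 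Then set $\sigma(t) = \int_0^t \rho(s)\,ds$. Because $\rho$ is even, $\sigma$ is odd; because $\rho > 0$, $\sigma$ is a strictly increasing smooth function; because $\int_0^1 \rho = \int_0^{-1}(-\rho) = \pm\infty$, $\sigma$ maps $(-1,1)$ onto all of $\R$; and since $\sigma' = \rho > 0$, the inverse function theorem makes $\sigma$ a diffeomorphism $(-1,1) \to \R$. On $[-\tfrac12, \tfrac12]$ we have $\sigma(t) = \int_0^t 1\,ds = t$.

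The remaining point is to massage this $\sigma$ into the multiplicative form $\sigma(t) = \eta(t) t$ with $\eta$ smooth, positive, even, and identically $1$ on $[-\tfrac12,\tfrac12]$. I would simply define $\eta(t) = \sigma(t)/t$ for $t \neq 0$ and $\eta(0) = 1$. That $\eta$ extends smoothly across $0$ follows from the standard Hadamard-type lemma: since $\sigma$ is smooth and $\sigma(0) = 0$, we may write $\sigma(t) = t \int_0^1 \sigma'(ut)\,du$, and the function $t \mapsto \int_0^1 \sigma'(ut)\,du$ is smooth in $t$ with value $\sigma'(0) = \rho(0) = 1$ at $t = 0$; this is exactly $\eta$. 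Positivity of $\eta$ is clear away from $0$ because $\sigma$ and $t$ have the same sign (as $\sigma$ is odd and strictly increasing through $0$), and $\eta(0) = 1 > 0$; evenness of $\eta$ follows from oddness of $\sigma$ (so $\eta(-t) = \sigma(-t)/(-t) = -\sigma(t)/(-t) = \sigma(t)/t = \eta(t)$), which also gives $\eta(t) = \eta(|t|)$; and on $[-\tfrac12,\tfrac12]\setminus\{0\}$ we have $\eta(t) = t/t = 1$, with $\eta(0) = 1$ as well, so $\eta|_{[-\frac12,\frac12]} \equiv 1$.

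The only genuinely delicate step is arranging $\int_0^{\pm 1}\rho = \pm\infty$ while keeping $\rho$ smooth on the open interval and equal to $1$ near the origin — but this is routine: any smooth positive function on $(\tfrac12, 1)$ that blows up like $(1-t)^{-1}$ or faster has divergent integral, and a standard smooth cutoff glues it to the constant $1$ on $[-\tfrac12,\tfrac12]$. I do not expect any real obstacle here; the lemma is purely a one-dimensional calculus construction, and all the required properties of $\eta$ are immediate consequences of the choice of $\rho$ together with the Hadamard lemma for the smooth extension at $0$.
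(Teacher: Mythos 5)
Your construction is correct, but it reaches the lemma by a genuinely different route from the paper. You prescribe the derivative: choose a smooth, positive, even $\rho$ equal to $1$ on $[-\tfrac12,\tfrac12]$ with divergent integral near $\pm 1$, set $\sigma(t)=\int_0^t\rho$, and then recover the multiplicative form $\sigma(t)=\eta(t)t$ a posteriori via the Hadamard-type identity $\eta(t)=\int_0^1\sigma'(ut)\,du$, from which smoothness, positivity, evenness, and $\eta\equiv 1$ on $[-\tfrac12,\tfrac12]$ all follow as you argue. The paper instead builds $\sigma$ directly in multiplicative form: it takes the explicit diffeomorphism $\varphi(t)=t/\sqrt{1-t^2}$, a cutoff $\rho$ vanishing on $[-\tfrac12,\tfrac12]$ and equal to $1$ near the endpoints, and sets $\sigma(t)=\rho(t)\varphi(t)+t=\bigl(\rho(t)/\sqrt{1-t^2}+1\bigr)t$, so that $\eta(t)=\rho(t)/\sqrt{1-t^2}+1$ is given by a closed formula and no division-by-$t$ argument is needed; bijectivity follows from $\sigma'\geq 1$ together with surjectivity. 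The trade-off is minor: your approach makes monotonicity and surjectivity completely transparent (you control $\sigma'$ directly) at the cost of invoking the Hadamard lemma at $t=0$, while the paper's approach hands you $\eta$ explicitly but requires the small observation that $t\mapsto\rho(t)\varphi(t)$ is non-decreasing so that $\sigma'\geq 1$. Both are complete proofs; there is no gap in yours.
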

\begin{proof}
    Let $\varphi : (-1, 1) \to \mathbb{R}$ be the diffeomorphism obtained by combining the diffeomorphism form $(-1, 1)$ to the half unit circle and the stereographic projection, i.e.,
    $$\varphi(t) = \frac{t}{\sqrt{1 - t^2}}, \quad \forall t \in (-1, 1).$$
    Let $\rho : (-1, 1) \to \mathbb{R}$ be a smooth function such that $\rho|_{[-\frac{1}{2}, \frac{1}{2}]} \equiv 0$, $\rho|_{(-1, - \frac{3}{4}] \cup [\frac{3}{4}, 1)} \equiv 1$, $\rho$ is strictly increasing in $[\frac{1}{2}, \frac{3}{4}]$, and $\rho(t) = \rho(|t|)$, $\forall t \in (-1, 1)$. Then define $\sigma : (-1, 1) \to \mathbb{R}$ by
    $$\sigma(t) = \rho(t)\varphi(t) + t = \Big(\frac{\rho(t)}{\sqrt{1 - t^2}} + 1\Big)t, \quad \forall t \in (-1, 1).$$
    It is clear that $t \mapsto \rho(t)\varphi(t)$ is non-decreasing. Hence $\sigma'(t) \geq 1$, $\forall t \in (-1, 1)$. Then since $\sigma\big((-1, 1)\big) = \mathbb{R}$ and $\sigma$ is increasing, we have that $\sigma$ is a diffeomorphism. Let $\eta(t) = \frac{\rho(t)}{\sqrt{1 - t^2}} + 1$, $\forall t \in (-1, 1)$. It is clear that $\eta$ is smooth and positive. Since $\rho|_{[-\frac{1}{2}, \frac{1}{2}]} \equiv 0$ and $\rho(t) = \rho(|t|)$, $\forall t \in (-1, 1)$, we have that $\eta|_{[-\frac{1}{2}, \frac{1}{2}]} \equiv 1$ and $\eta(x) = \eta(|x|)$, $\forall x \in (-1, 1)$. Therefore, the proof is complete.
\end{proof}

\begin{remark} \label{rmk-diffeo-interval-R}
    Let $\sigma$ be a diffeomorphism as in the above lemma. Since $\sigma(t) = t$, $\forall t \in [-\frac{1}{2}, \frac{1}{2}]$, we have that the inverse map $\sigma^{-1} : \mathbb{R} \to (-1, 1)$ has the form $\sigma^{-1}(s) = \tau(s)s$, $\forall s \in \mathbb{R}$, where $\tau : \mathbb{R} \to \mathbb{R}$ is a smooth positive function such that $\tau|_{[-\frac{1}{2}, \frac{1}{2}]} \equiv 1$, and $\tau(s) = \tau(|s|)$, $\forall s \in \mathbb{R}$.
\end{remark}

\begin{proposition} \label{prop-extension-vector-bundle}
    Let $\pi : E \to M$ be a smooth vector bundle on a smooth manifold $M$. Let $g$ be a smooth bundle metric on $E$, and $\delta : M \to \R$ a positive smooth function on $M$. Let $W = \big\{(p, v) \in E \big| |v|_g < \delta(p)\big\}$, and $W' = \big\{(p, v) \in E \big| |v|_g < \frac{1}{2}\delta(p)\big\}$. There exists a diffeomorphism
    $$\Phi : W \to E$$
    such that $\Phi|_{W'} = {\rm Id}_{W'}$.
\end{proposition}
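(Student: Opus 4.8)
The plan is to define $\Phi$ fiberwise as a radial rescaling built from the diffeomorphism $\sigma : (-1,1) \to \R$ of Lemma \ref{lem-diffeo-interval-R}. For $(p,v) \in W$ we have $|v|_g/\delta(p) \in [0,1)$, so we may set
$$\Phi(p,v) = \Big(p,\; \eta\big(|v|_g/\delta(p)\big)\, v\Big),$$
where $\eta$ is the positive even smooth function on $(-1,1)$ with $\eta|_{[-\frac12,\frac12]}\equiv 1$ and $\sigma(t) = \eta(t)t$ provided by Lemma \ref{lem-diffeo-interval-R}. The reason for writing the radial factor via $\eta$ rather than $\sigma$ directly is that then $\Phi(p,v) = (p,w)$ with $w$ in the same direction as $v$ and $|w|_g = \delta(p)\,\sigma(|v|_g/\delta(p))$; since $\sigma$ is odd and increasing with $\sigma((-1,1)) = \R$, it restricts to an increasing bijection $[0,1) \to [0,\infty)$, so $v \mapsto \eta(|v|_g/\delta(p))v$ is a bijection of the open $g$-ball of radius $\delta(p)$ in $E_p$ onto all of $E_p$. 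Hence $\Phi$ is a bijection $W \to E$, and since $\eta \equiv 1$ on $[-\frac12,\frac12]$, on $W'$ (where $|v|_g/\delta(p) < \frac12$) we get $\Phi(p,v) = (p,v)$, i.e. $\Phi|_{W'} = \mathrm{Id}_{W'}$.

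Next I would exhibit the inverse explicitly using Remark \ref{rmk-diffeo-interval-R}: with $\tau : \R \to \R$ the positive even smooth function satisfying $\sigma^{-1}(s) = \tau(s)s$ and $\tau|_{[-\frac12,\frac12]}\equiv 1$, define $\Psi : E \to W$ by $\Psi(p,w) = (p,\tau(|w|_g/\delta(p))\,w)$. This genuinely lands in $W$ because $\tau(|w|_g/\delta(p))\,|w|_g = \delta(p)\,\sigma^{-1}(|w|_g/\delta(p)) < \delta(p)$. A short radial computation, reducing everything to the identities $\sigma^{-1}\circ\sigma = \mathrm{id}$ on $(-1,1)$ and $\sigma\circ\sigma^{-1} = \mathrm{id}$ on $\R$ (equivalently $\tau(\sigma(t)) = 1/\eta(t)$ and $\eta(\sigma^{-1}(s)) = 1/\tau(s)$), shows $\Psi\circ\Phi = \mathrm{Id}_W$ and $\Phi\circ\Psi = \mathrm{Id}_E$.

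The only real subtlety — and the step I expect to need the most care — is smoothness of $\Phi$ and $\Psi$ along the zero section, where $v \mapsto |v|_g = \sqrt{g(v,v)}$ is not smooth. The remedy is that $\eta$ and $\tau$ are \emph{even}: a smooth even function on an interval is a smooth function of the square of its argument (this is visible directly from the formula $\eta(t) = \rho(t)(1-t^2)^{-1/2} + 1$ in the proof of Lemma \ref{lem-diffeo-interval-R}, since $\rho$ may be taken even and smooth, hence $\rho(t) = \hat\rho(t^2)$ for a smooth $\hat\rho$). Thus $\eta(t) = h(t^2)$ and $\tau(s) = k(s^2)$ for smooth functions $h$ on $[0,1)$ and $k$ on $[0,\infty)$, so that
$$\eta\big(|v|_g/\delta(p)\big) = h\!\big(g(v,v)/\delta(p)^2\big), \qquad \tau\big(|w|_g/\delta(p)\big) = k\!\big(g(w,w)/\delta(p)^2\big),$$
which are smooth in $(p,v)$ and $(p,w)$ on all of $E$ because $v\mapsto g(v,v)$ is a smooth quadratic form on $E$ and $\delta$ is smooth and strictly positive. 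Once smoothness of $\Phi$ and $\Psi$ is in hand, the inversion identities of the previous paragraph show $\Phi : W \to E$ is a diffeomorphism restricting to the identity on $W'$, completing the proof.
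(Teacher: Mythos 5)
Your proposal is correct and follows essentially the same route as the paper: the same fiberwise radial rescaling $\Phi(p,v) = \big(p, \eta(|v|_g/\delta(p))v\big)$ built from Lemma \ref{lem-diffeo-interval-R}, with the inverse given by the analogous formula via Remark \ref{rmk-diffeo-interval-R}. The only real difference is how smoothness across the zero section is justified: the paper relies on $\eta \equiv 1$ on $[-\frac{1}{2},\frac{1}{2}]$, so that $\Phi$ is literally the identity near the zero section where $|v|_g$ fails to be smooth, whereas you invoke evenness of $\eta$ to rewrite the radial factor as a smooth function of $g(v,v)/\delta(p)^2$ --- both arguments are valid.
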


\begin{proof}
    Let $\sigma : (-1, 1) \to \mathbb{R}$ be a diffeomorphism as in Lemma \ref{lem-diffeo-interval-R} with the form $\sigma(t) = \eta(t)t$. Then we define $\Phi : W \to E$ by
    $$\Phi(p, v) = \big(p, \eta(\frac{|v|_g}{\delta(p)})v\big), \quad \forall (p, v) \in W.$$
    It is clear that $\Phi$ is bijective. Since $\eta|_{[-\frac{1}{2}, \frac{1}{2}]} \equiv 1$, we have that $\Phi$ is smooth and $\Phi|_{W'}$ is the identity map of $W'$. Moreover, by Remark \ref{rmk-diffeo-interval-R}, the inverse map $\Phi^{-1}$ has a similar expression as $\Phi$, and then the smoothness of $\Phi^{-1}$ is immediate. Therefore, the proposition has been proved.
\end{proof}

\begin{corollary} \label{cor-extension-vector-bundle}
    Let $\pi : E \to M$ be a smooth vector bundle over a smooth manifold $M$. Let $V \subset E$ be an arbitrary open subset of $E$ containing the zero-section $M$, and $F : V \to M'$ a smooth map from $V$ to a smooth manifold $M'$. Then there exists an open subset $W \subset V$ of $V$ containing $M$, and a smooth map $\tilde{F} : E \to M'$ such that $\tilde{F}|_W = F|_W$. Moreover, if $F$ is an embedding, $\tilde{F}$ can also be an embedding.
\end{corollary}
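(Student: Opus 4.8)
The plan is to deduce the corollary directly from Proposition \ref{prop-extension-vector-bundle}. First I would fix a smooth bundle metric $g$ on $E$. Since the zero-section $M$ is contained in the open set $V$, the partition-of-unity argument recorded in the paragraph preceding Proposition \ref{prop-extension-vector-bundle} yields a positive smooth function $\delta : M \to \R$ with
$$\big\{(p, v) \in E \,\big|\, |v|_g < \delta(p)\big\} \subset V.$$
Put $W_\delta := \big\{(p,v) \in E \,\big|\, |v|_g < \delta(p)\big\}$ and $W := \big\{(p,v) \in E \,\big|\, |v|_g < \tfrac{1}{2}\delta(p)\big\}$; then $W \subset W_\delta \subset V$, and both $W$ and $W_\delta$ are open subsets of $E$ containing the zero-section $M$.

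Next I would invoke Proposition \ref{prop-extension-vector-bundle} (applied with this $g$ and this $\delta$) to obtain a diffeomorphism $\Phi : W_\delta \to E$ with $\Phi|_{W} = {\rm Id}_{W}$, and then simply set
$$\tilde F := F \circ \Phi^{-1} : E \longrightarrow M'.$$
This map is smooth, since $\Phi^{-1} : E \to W_\delta$ is smooth and $F$ is smooth on $V \supset W_\delta$. Because $\Phi$ fixes $W$ pointwise we have $\Phi(W) = W$, hence $\Phi^{-1}|_{W} = {\rm Id}_{W}$, and therefore $\tilde F|_{W} = F \circ \Phi^{-1}|_{W} = F|_{W}$. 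This establishes the first assertion with this particular choice of $W$.

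For the final claim, suppose in addition that $F$ is an embedding. Then $F|_{W_\delta}$, the restriction of $F$ to the open subset $W_\delta \subset V$, is again an embedding: it is an injective immersion, and it is a homeomorphism onto its image $F(W_\delta)$, the latter being open in $F(V)$. Since $\Phi^{-1} : E \to W_\delta$ is a diffeomorphism it is an embedding, and hence the composition $\tilde F = F|_{W_\delta} \circ \Phi^{-1}$ is an embedding. I do not anticipate any real obstacle here: all the substance has already been packaged into Lemma \ref{lem-diffeo-interval-R} and Proposition \ref{prop-extension-vector-bundle}, and the remaining steps are the (routine) existence of $\delta$ with $W_\delta \subset V$ together with the standard facts that the restriction of an embedding to an open subset is an embedding and that a composition of embeddings is an embedding.
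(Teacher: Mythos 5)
Your proposal is correct and follows exactly the route the paper intends: choose $\delta$ so that $W_\delta \subset V$, apply Proposition \ref{prop-extension-vector-bundle} to get $\Phi : W_\delta \to E$, and set $\tilde F = F \circ \Phi^{-1}$. The paper's own proof compresses all of this into ``the corollary is immediately proved,'' so your version simply supplies the details (the identity $\Phi^{-1}|_W = {\rm Id}_W$ and the fact that restrictions and compositions of embeddings are embeddings) that the paper leaves implicit.
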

\begin{proof}
    Let $\delta : M \to \R$ be a smooth positive function such that
    $$\big\{(p, v) \in E \big| |v|_g < \delta(p)\big\} \subset V.$$
    Then applying Proposition \ref{prop-extension-vector-bundle} to the open subset $U = \big\{(p, v) \in E \big| |v|_g < \delta(p)\big\}$, the corollary is immediately proved.
\end{proof}

\bigskip
Haoyuan Gao (hy\_gao@fudan.edu.cn)

Shanghai Center for Mathematical Sciences, Fudan University, Jiangwan Campus, Shanghai 200438, P. R. China

%\bigskip

\end{document}